\newtheorem{theo}{Theorem}[section]
\newtheorem{lem}{Lemma}
\newtheorem{pro}{Proposition}[section]
\newtheorem{cor}{Corollary}[section]
\newtheorem{defi}{Definition}
\newtheorem{note}{Note}
\newcommand{\be}{\begin{equation}}
\newcommand{\ee}{\end{equation}}
\newcommand{\beas}{\begin{eqnarray*}}
\newcommand{\eeas}{\end{eqnarray*}}
\newcommand{\bea}{\begin{eqnarray}}
\newcommand{\eea}{\end{eqnarray}}
\numberwithin{equation}{section}
\begin{document}

\setlength{\unitlength}{1mm} \baselineskip .52cm
\setcounter{page}{1}
\pagenumbering{arabic}
\title[On Weak $G$-Completeness for Fuzzy Metric Spaces]{On Weak $G$-Completeness for Fuzzy Metric Spaces}

\author[Sugata Adhya and A. Deb Ray]{Sugata Adhya and A. Deb Ray}

\address{Department of Mathematics, The Bhawanipur Education Society College. 5, Lala Lajpat Rai Sarani, Kolkata 700020, West Bengal, India.}
\email {sugataadhya@yahoo.com}

\address{Department of Pure Mathematics, University of Calcutta. 35, Ballygunge Circular Road, Kolkata 700019, West Bengal, India.}
\email {debrayatasi@gmail.com}

\maketitle

\begin{abstract}
In this paper, we provide equivalent characterizations of weak $G$-complete fuzzy metric spaces. Since such spaces are complete, we also characterize fuzzy metric spaces that have weak $G$-complete fuzzy metric completions. Moreover we establish analogous results for classical metric spaces.
\end{abstract}

\noindent{\textit{AMS Subject Classification:} 54A40, 54E35, 54E40}.\\
{\textit{Keywords:} {(Fuzzy) metric space, weak $G$-complete.}} 

\section{\textbf{Introduction}}

Grabiec \cite{5} introduced $G$-Cauchy sequence as a weaker form of Cauchy sequence in the fuzzy context. He employed it to establish the celebrated Banach Contraction Principle for fuzzy metric spaces proposed by Kramosil and Michalek \cite{km}. $G$-Cauchyness was later adopted for fuzzy metrics in the context of Georege and Veeramani \cite{ver1}. The associated notion of completeness, known as $G$-completeness, has been extensively used to study fixed point theorems in fuzzy metric spaces. For details, one may consult \cite{f1, f2, 5, f3, f4}.

$G$-Cauchyness, being weaker than the usual Cauchyness, leads to a stronger completeness.  Unfortunately, $G$-completeness is even more stronger than it is desired to be, so that even a compact fuzzy metric space fails to be $G$-complete. To overcome this drawback Gregori, Mi$\tilde{\text{n}}$ana and Sapena introduced the notion of weak $G$-completeness \cite{wg}. They adopted and studied this new notion both for metric and fuzzy metric settings. In particular, they generalized Grabiec’s Banach Contraction Principle. Recently, in \cite{wgms} the authors characterized weak $G$-completeness by means of nested sequences of non-empty closed sets in the classical metric context.

It is worth noting, at this stage, that the class of weak $G$-complete (fuzzy) metric spaces lie between the classes of compact and complete (fuzzy) metric spaces. Metric spaces lying in this intermediate class have been an active research area in classical analysis over the years. Atsuji spaces \cite{at,pcs} and cofinally complete space \cite{pcs} are examples of such metric spaces. Further, the spaces lying in this intermediate class demand convergence of a class of sequences broader than the class of Cauchy sequences. Thus for Atsuji spaces we obtain the class of pseudo-Cauchy sequences \cite{pcs} whereas for cofinally complete spaces we obtain the class of cofinally Cauchy sequences \cite{3}.

The aim of this paper is to provide new characterizations for weak $G$-complete fuzzy metric spaces.  Here we characterize weak $G$-complete fuzzy metric spaces by means of the fuzzy metric analogue of pseudo-Cauchy and cofinally Cauchy sequences. Since a weak $G$-complete fuzzy metric space is complete, in what follows, we characterize those fuzzy metrics that have weak $G$-complete fuzzy metric completions. We also provide the classical metric analogue of our fuzzy metric characterizations for weak $G$-completeness.

\section{\textbf{Preliminaries}}

Throughout the paper the only notion of fuzzy metric we will be working on is the one due to George and Veeramani \cite{ver1, ver2} that goes as follows:

\begin{defi}
\normalfont A fuzzy metric space is an ordered triple $(X,M,*)$ where $X$ is a nonempty set, $*$ is a continuous $t$-norm and $M:X\times X\times(0,\infty)\to[0,1]$ is a mapping such that, for all $x,y,z\in X$ and $s,t>0,$ the following conditions hold:

a) $M(x,y,t)>0,$

b) $M(x,y,t)=1\iff x=y,$

c) $M(x,y,t)=M(y,x,t),$

d) $M(x,y,t)*M(y,z,s)\le M(x,z,t+s),$

e) $M(x,y,.):(0,\infty)\to[0,1]$ is continuous.

In this case, $(M,*)$ is said to be a fuzzy metric on $X.$
\end{defi}

\begin{lem}
\normalfont\cite{ver1} Given a fuzzy metric space $(X,M,*),$ $M(x,y,\cdot)$ defines a nondecresing map on $(0,\infty),$ $\forall~x,y\in X.$
\end{lem}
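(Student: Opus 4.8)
The plan is to exploit the triangle inequality (condition (d)) with a judicious choice of intermediate point so that the $t$-norm collapses to the identity. Fix $x,y\in X$ and take $0<t<t'$; writing $t'=t+s$ for some $s>0$, it suffices to prove $M(x,y,t)\le M(x,y,t+s)$, since $s>0$ is arbitrary.

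First I would apply condition (d) with the middle vertex $z=y$ and the two time parameters $t$ and $s$, obtaining
\[
M(x,y,t)*M(y,y,s)\le M(x,y,t+s).
\]
The point of selecting $y$ as the intermediate point is that, by condition (b), we have $M(y,y,s)=1$ for every $s>0$.

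Next I would invoke the defining property of a continuous $t$-norm that $1$ is its neutral element, so that $a*1=a$ for all $a\in[0,1]$; in particular $M(x,y,t)*M(y,y,s)=M(x,y,t)*1=M(x,y,t)$. Substituting this into the displayed inequality yields $M(x,y,t)\le M(x,y,t+s)=M(x,y,t')$, which is precisely the assertion that $M(x,y,\cdot)$ is nondecreasing on $(0,\infty)$.

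The argument is essentially immediate once the correct intermediate point is chosen, so there is no serious obstacle; the only step that genuinely requires care is recognizing that the neutrality of $1$ for the $t$-norm—a property inherent to the definition of a $t$-norm but not restated among conditions (a)--(e)—is what makes the left-hand side simplify. An entirely symmetric variant would instead take $x$ as the middle vertex, using $M(x,x,t)=1$ together with the commutativity of $*$.
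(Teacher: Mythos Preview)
Your argument is correct and is exactly the standard proof (the one given in \cite{ver1}): apply axiom (d) with $z=y$, use $M(y,y,s)=1$ from axiom (b), and simplify via the neutrality of $1$ for the $t$-norm. The paper itself does not supply a proof of this lemma---it is merely quoted from \cite{ver1}---so there is nothing further to compare.
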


It has been shown in \cite{ver1} that every fuzzy metric $(M,*)$ on $X$ generates a first countable topology $\tau_M$ on $X$ such that $\{B(x,r,t):x\in X,r\in(0,1),t>0\}$ forms a base for $\tau_M,$ where $B(x,r,t)=\{y\in X:M(x,y,t)>1-r\},~\forall~x\in X,r\in(0,1),t>0.$ 

On the other hand, if $(X,d)$ is a metric space and $M_d:X\times X\times(0,\infty)\to[0,1]$ is defined by $M_d(x,y,t)=\frac{t}{t+d(x,y)},~\forall~x,y\in X,t>0,$ then $(X,M_d,\cdot)$ defines a fuzzy metric space $(\cdot$ being the usual multiplication of real numbers). Moreover, the topology $\tau(d)$ generated by the metric $d$ coincides with $\tau_{M_d}.$

\begin{theo}
\normalfont \cite{uni} Given a fuzzy metric space $(X,M,*),$ $(X,\tau_M)$ is metrizable.
\end{theo}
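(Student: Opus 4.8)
The plan is to realise $\tau_M$ as the topology of a uniformity on $X$ carrying a countable base, and then to invoke the classical metrization theorem for uniform spaces (Kelley): a uniform space is metrizable exactly when its uniformity is separating and possesses a countable base. The excerpt records only that $\tau_M$ is first countable, and indeed George--Veeramani fuzzy metric spaces need not be separable, so the Urysohn metrization theorem is unavailable in general and the uniform route is the natural one.

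First I would write down the candidate uniformity. For $n\in\mathbb{N}$ put
\[
U_n=\Bigl\{(x,y)\in X\times X: M\bigl(x,y,\tfrac1n\bigr)>1-\tfrac1n\Bigr\},
\]
and let $\{U_n:n\in\mathbb{N}\}$ generate a filter $\mathcal U$ on $X\times X$. By axiom~(b) each $U_n$ contains the diagonal, and by axiom~(c) each $U_n$ is symmetric; the decreasing normalisation (shrinking the radius $1/n$ and the time $1/n$ together) is arranged precisely so that a single index $m$ can dominate a prescribed $n$. Note that $U_n[x]=B(x,\tfrac1n,\tfrac1n)$, so these entourages recover the basic $\tau_M$-balls at each point.

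The crux --- and the step I expect to be the main obstacle --- is the composition axiom $U_m\circ U_m\subseteq U_n$, which is where the $t$-norm carries the load. Fix $n$. Since $*$ is continuous and $1*1=1>1-\tfrac1n$, I can choose $\delta\in(0,1)$ with $(1-\delta)*(1-\delta)>1-\tfrac1n$, and then pick $m\ge 2n$ with $\tfrac1m\le\delta$. If $(x,y),(y,z)\in U_m$ then $M(x,y,\tfrac1m)>1-\tfrac1m\ge1-\delta$ and likewise for $(y,z)$, so by the triangle inequality~(d) and monotonicity of $*$,
\[
M\bigl(x,z,\tfrac{2}{m}\bigr)\ge M\bigl(x,y,\tfrac1m\bigr)*M\bigl(y,z,\tfrac1m\bigr)\ge(1-\delta)*(1-\delta)>1-\tfrac1n .
\]
As $M(x,z,\cdot)$ is nondecreasing (the Lemma) and $\tfrac{2}{m}\le\tfrac1n$, this gives $M(x,z,\tfrac1n)\ge M(x,z,\tfrac{2}{m})>1-\tfrac1n$, i.e.\ $(x,z)\in U_n$. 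Hence $\mathcal U$ is a uniformity with a countable base.

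It remains to confirm two routine facts. The uniform topology coincides with $\tau_M$: the sets $U_n[x]=B(x,\tfrac1n,\tfrac1n)$ form a neighbourhood base at $x$, and conversely any $B(x,r,t)$ contains $B(x,\tfrac1n,\tfrac1n)$ once $\tfrac1n\le\min\{r,t\}$, using monotonicity in $t$. The uniformity is separating: if $x\ne y$ then $0<M(x,y,1)<1$ by~(a)--(b), so for $n$ large enough that $1-\tfrac1n\ge M(x,y,1)\ge M(x,y,\tfrac1n)$ we get $(x,y)\notin U_n$, whence $\bigcap_n U_n=\Delta$. A separating uniformity with a countable base is metrizable, and the resulting metric induces $\tau_M$, which completes the argument.
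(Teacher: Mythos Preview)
The paper does not supply its own proof of this theorem: it is quoted from \cite{uni} (Gregori--Romaguera) as background, so there is nothing in the present article to compare against. That said, your argument is correct and is in fact essentially the proof given in \cite{uni}: one exhibits the countable base $\{U_n\}$ of symmetric entourages, uses continuity of $*$ together with the triangle axiom (d) and the monotonicity lemma to verify the composition law $U_m\circ U_m\subseteq U_n$, checks that the induced uniform topology is $\tau_M$, and appeals to the metrization theorem for uniformities with a countable base. Your handling of the separating condition and of the inclusion $B(x,\tfrac1n,\tfrac1n)\subseteq B(x,r,t)$ for $\tfrac1n\le\min\{r,t\}$ is accurate. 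Nothing to fix.
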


Let $(X,M, *)$ be a fuzzy metric space and $A\subset X.$ If $M_A=M|_{A\times A\times(0,\infty)},$ then $(A,M_A,*)$ defines a fuzzy metric space called the fuzzy metric subspace of $(X, M, *)$ on $A$ \cite{sub}. Clearly $\tau_{M_A}=(\tau_M)_A,$ $(\tau_M)_A$ being the subspace topology on $A$ induced by $\tau_M.$ 

$(X,M,*)$ is called precompact if for $r\in(0,1)$ and $t>0,$ there exists a finite subset $A$ of $X$ such that $X=\bigcup_{x\in A}{B_M(x,r,t)}$ \cite{uni}.

Convergence of sequences in $(X, M, *)$ is defined with respect to $\tau_M$. Thus a sequence $(x_n)$ in $(X,M,*)$ is said to be convergent to $x$ (\textit{resp.} clusters), if it does so in $(X,\tau_M)$ \cite{ver1}. 

\begin{theo}
\normalfont \cite{ver1} A sequence $(x_n)$ in a fuzzy metric space $(X,M,*)$ converges to $x\in X$ if and only if $\lim\limits_{n\to\infty}M(x_n,x,t)= 1,~\forall~t>0.$
\end{theo}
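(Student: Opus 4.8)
The plan is to prove both implications by directly unwinding the definition of convergence in the topology $\tau_M$. Recall that $(x_n)$ converges to $x$ in $(X,\tau_M)$ precisely when, for every open set $U$ containing $x$, the sequence is eventually in $U$. Since the balls $\{B(x,r,t):r\in(0,1),t>0\}$ form a base for $\tau_M$ and every such ball containing $x$ must contain a basic ball centred at $x$, it suffices to test eventual membership only against the basic neighbourhoods $B(x,r,t)$ of $x$ itself. Thus $(x_n)\to x$ in $\tau_M$ if and only if for each $r\in(0,1)$ and $t>0$ there is an $N$ with $x_n\in B(x,r,t)$, i.e.\ $M(x,x_n,t)>1-r$, for all $n\ge N$. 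The whole argument then amounts to translating this ball-membership condition into the limit condition $\lim_{n\to\infty}M(x_n,x,t)=1$, using only the symmetry axiom (c) to interchange the two arguments of $M$.

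For the forward implication I would assume $(x_n)\to x$ and fix $t>0$ together with an arbitrary $\varepsilon>0$; since $M$ takes values in $[0,1]$, the case $\varepsilon\ge1$ is immediate, so I may take $\varepsilon\in(0,1)$ and set $r=\varepsilon$. Convergence gives an $N$ such that $x_n\in B(x,r,t)$ for all $n\ge N$, that is $M(x,x_n,t)>1-\varepsilon$, and axiom (c) yields $M(x_n,x,t)=M(x,x_n,t)>1-\varepsilon$. As $M(x_n,x,t)\le1$ always, this shows $M(x_n,x,t)\to1$, and since $t>0$ was arbitrary the limit condition holds for every $t>0$.

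For the converse I would assume $\lim_{n\to\infty}M(x_n,x,t)=1$ for all $t>0$ and verify eventual membership in an arbitrary basic neighbourhood $B(x,r,t)$ of $x$. Because $1>1-r$, the hypothesis furnishes an $N$ with $M(x_n,x,t)>1-r$ for all $n\ge N$, and symmetry again gives $M(x,x_n,t)>1-r$, so $x_n\in B(x,r,t)$ for all $n\ge N$. As this holds for every $r\in(0,1)$ and $t>0$, the sequence is eventually inside each basic neighbourhood of $x$, whence $(x_n)\to x$ in $\tau_M$. There is no genuine obstacle here: the proof is a routine unravelling of definitions, and the only point requiring mild care is the reduction from arbitrary open sets to the basic balls centred at $x$, which is justified by $\{B(x,r,t)\}$ being a base and each $B(x,r,t)$ containing its own centre.
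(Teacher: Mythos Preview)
Your argument is correct and is the standard proof. The paper does not supply its own proof of this statement; it is recorded in the preliminaries as a known result from \cite{ver1}, so there is nothing to compare your approach against.
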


A sequence $(x_n)$ in a fuzzy metric space $(X,M,*)$ is called Cauchy if for $\epsilon\in(0,1),t>0,$ there exists $k\in\mathbb N$ such that  $M(x_m,x_n,t)>1-\epsilon,~\forall~m,n\ge k.$ It is easy to see that every convergent sequence in $(X,M,*)$ is Cauchy. As usual, $(X,M,*)$ is called complete if every Cauchy sequence in it converges \cite{ver1}.

The following proposition can be easily deduced.

\begin{pro}\label{pcau}
\normalfont Let $(X,d)$ be a metric space. Then 

a) A sequence $(x_n)$ is Cauchy in $(X,d)$ if and only if $(x_n)$ is Cauchy in $(X,M_d,\cdot).$

b) $A\subset X$ is complete as a metric subspace of $(X,d)$ if and only if $A$ is complete as a fuzzy metric subspace of $(X,M_d,\cdot).$
\end{pro}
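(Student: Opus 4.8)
The plan is to reduce everything to the explicit algebraic form of $M_d$. For part (a), I would first record the elementary equivalence, valid for any $\epsilon\in(0,1)$, $t>0$ and $x,y\in X$,
\[
M_d(x,y,t)>1-\epsilon \iff \frac{t}{t+d(x,y)}>1-\epsilon \iff d(x,y)<\frac{\epsilon t}{1-\epsilon}.
\]
This identity is the whole engine of the argument: it converts a constraint phrased in the fuzzy tolerance $(\epsilon,t)$ into one phrased in an ordinary metric tolerance, and conversely.

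With this in hand, the forward implication of (a) is immediate. Given $\epsilon\in(0,1)$ and $t>0$, set $\eta=\frac{\epsilon t}{1-\epsilon}>0$; metric Cauchyness supplies $k\in\mathbb N$ with $d(x_m,x_n)<\eta$ for all $m,n\ge k$, and the displayed equivalence then yields $M_d(x_m,x_n,t)>1-\epsilon$ for all $m,n\ge k$. For the converse I would fix, once and for all, a single scale, say $t=1$, and given $\eta>0$ choose $\epsilon=\frac{\eta}{1+\eta}\in(0,1)$, so that $\frac{\epsilon}{1-\epsilon}=\eta$; fuzzy Cauchyness applied at this particular $(\epsilon,1)$ then forces $d(x_m,x_n)<\eta$ eventually. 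Hence the two Cauchy notions coincide.

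Part (b) I would then deduce from (a) together with the topological facts already recorded. First I would check that the two subspace constructions agree, namely that the fuzzy metric subspace of $(X,M_d,\cdot)$ on $A$ is exactly the fuzzy metric $M_{d|_A}$ induced by the restricted metric, which is clear since $M_d(x,y,t)=\frac{t}{t+d(x,y)}$ for all $x,y\in A$ and $t>0$. Applying part (a) to $(A,d|_A)$ shows that a sequence in $A$ is Cauchy in the metric sense if and only if it is Cauchy in the fuzzy sense, while the coincidence of topologies $\tau(d|_A)=\tau_{M_{d|_A}}$ shows that the two notions of convergence of a sequence in $A$ also coincide. Since completeness asserts precisely that every Cauchy sequence converges to a point of the space, the two completeness statements are equivalent.

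I do not expect a genuine obstacle here, as the argument is entirely routine; the only point requiring a little care is the bookkeeping in (b)---verifying that restriction commutes with the construction $d\mapsto M_d$, and that convergence may be transported between the metric and fuzzy settings through the common topology---rather than any analytic difficulty.
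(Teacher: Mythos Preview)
Your proof is correct; the paper itself gives no argument for this proposition, merely remarking that it ``can be easily deduced,'' and your direct computation with the explicit form $M_d(x,y,t)=\frac{t}{t+d(x,y)}$ is exactly the routine verification one would supply. The bookkeeping you flag in part (b)---that $(M_d)_A=M_{d|_A}$ and that convergence transfers via the common topology $\tau(d)=\tau_{M_d}$---is handled cleanly.
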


Given two fuzzy metric spaces $(X,M,*)$ and $(Y,N,\star),$ a mapping $f:X\to Y$ is called an isometry if $M(x,y,t)=N(f(x),f(y),t),~\forall~x,y\in X,t>0.$ Moreover, if $f$ is onto then $(X,M,*)$ and $(Y,N,\star)$ are called isometric \cite{cpm}.

A fuzzy metric completion \cite{cpm} of $(X,M,*)$ is a complete fuzzy metric space such that $(X,M,*)$ is isometric to a dense subspace of it.

It is interesting to note that unlike metric spaces, a fuzzy metric space may not possess a fuzzy metric completion \cite{cpm}.

\begin{pro}
\normalfont \cite{cpm} Let $(X,d)$ be a metric space having completion $(\tilde{X},\tilde{d})$. Then, $(\tilde{X},M_{\tilde{d}},\cdot)$ is the unique (up to isometry) fuzzy metric completion of $(X,M_d,\cdot).$
\end{pro}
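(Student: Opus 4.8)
The plan is to split the statement into an existence part---verifying that $(\tilde X, M_{\tilde d},\cdot)$ is a fuzzy metric completion of $(X,M_d,\cdot)$---and a uniqueness part. The main tools are Proposition \ref{pcau}, which transfers Cauchyness and completeness back and forth between $d$ and $M_d$, together with the fact recorded above that $\tau_{M_d}=\tau(d)$ and the sequential convergence criterion $x_n\to x\iff\lim_n M(x_n,x,t)=1$.

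For existence, I would first check completeness. Since $(\tilde X,\tilde d)$ is a complete metric space, Proposition \ref{pcau}(b), applied with $A=\tilde X$, gives at once that $(\tilde X,M_{\tilde d},\cdot)$ is complete as a fuzzy metric space. Next, letting $\iota\colon X\to\tilde X$ denote the canonical dense metric embedding ($\tilde d(\iota x,\iota y)=d(x,y)$), I would compute, for all $x,y\in X$ and $t>0$,
\[
M_{\tilde d}(\iota x,\iota y,t)=\frac{t}{t+\tilde d(\iota x,\iota y)}=\frac{t}{t+d(x,y)}=M_d(x,y,t),
\]
so that $\iota$ is a fuzzy isometry onto the fuzzy metric subspace of $(\tilde X,M_{\tilde d},\cdot)$ on $\iota(X)$. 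Finally, density of $\iota(X)$ in $(\tilde X,\tilde d)$ together with $\tau_{M_{\tilde d}}=\tau(\tilde d)$ shows $\iota(X)$ is dense in $(\tilde X,\tau_{M_{\tilde d}})$. Hence $(\tilde X,M_{\tilde d},\cdot)$ is a fuzzy metric completion.

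For uniqueness, let $(Y,N,\star)$ be any fuzzy metric completion, with $j\colon X\to Y$ a dense fuzzy isometric embedding. I would build an isometry $\phi\colon\tilde X\to Y$ by extending $j\circ\iota^{-1}$ from the dense set $\iota(X)$: given $p\in\tilde X$ choose $x_n\in X$ with $\iota(x_n)\to p$; then $(\iota(x_n))$ is Cauchy in $(\tilde X,M_{\tilde d},\cdot)$, and since both $\iota$ and $j$ preserve fuzzy distances one gets $N(j x_m,j x_n,t)=M_{\tilde d}(\iota x_m,\iota x_n,t)$, so $(j(x_n))$ is Cauchy in $(Y,N,\star)$ and hence convergent; I would set $\phi(p)=\lim_n j(x_n)$. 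Well-definedness, the isometry identity $M_{\tilde d}(p,q,t)=N(\phi p,\phi q,t)$, and surjectivity (from density of $j(X)$ and completeness) then follow by passing to limits.

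The step I expect to be delicate is precisely this passage to the limit in the uniqueness argument: to conclude $M_{\tilde d}(p,q,t)=N(\phi p,\phi q,t)$ for every $t>0$ one must pass the limit through $N(\cdot,\cdot,t)$, which requires joint (or at least separate spatial) continuity of the fuzzy metric---a known but nontrivial feature of George--Veeramani fuzzy metrics. To avoid re-deriving this, I would instead invoke the general uniqueness of fuzzy metric completions from \cite{cpm}: once $(\tilde X,M_{\tilde d},\cdot)$ is exhibited as one completion, that result forces any other completion to be isometric to it, and the proposition follows.
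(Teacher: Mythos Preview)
The paper does not prove this proposition at all; it is simply recorded, with citation to \cite{cpm}, as a known fact from Gregori and Romaguera. So there is no argument in the paper to compare against.

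Your proposal is nonetheless correct. The existence half is exactly the natural argument: completeness of $(\tilde X,M_{\tilde d},\cdot)$ via Proposition~\ref{pcau}(b), the fuzzy isometry $\iota$ by the one-line computation with $M_d$, and density via $\tau_{M_{\tilde d}}=\tau(\tilde d)$. For uniqueness you correctly identify the only genuine subtlety (passing limits through $N(\cdot,\cdot,t)$, which amounts to joint continuity of the fuzzy metric in the spatial variables) and then appeal to the general ``completions are unique up to isometry when they exist'' theorem of \cite{cpm}. That is not circular: in \cite{cpm} the uniqueness theorem is established independently, and the present proposition is an application of it together with the existence step you supply. If you wanted a fully self-contained proof, the missing ingredient is precisely that $N$ is continuous on $Y\times Y\times(0,\infty)$ in the George--Veeramani setting, which is standard.
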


A sequence $(x_n)$ in a fuzzy metric space $(X,M,*)$ is called $G$-Cauchy if $\lim\limits_{n\to\infty}M(x_n,$ $x_{n+1},t)=0,~\forall~t>0$ \cite{5}. On the other hand, a sequence $(x_n)$ in a metric space $(X,d)$ is called $G$-Cauchy if $\lim\limits_{n\to\infty}d(x_n,x_{n+1})=0$ \cite{16}. A (fuzzy) metric space in which every $G$-Cauchy sequence converges is called a $G$-complete (fuzzy) metric space (\cite{5}, \cite{wg}). 

Unfortunately, this new notion of completeness is so strong that even compactness cannot imply $G$-completeness. To overcome this drawback, Gregori et. al. \cite{wg} introduced the following weaker version of completeness.

\begin{defi}
\normalfont A (fuzzy) metric space in which every $G$-Cauchy sequence clusters is called a weak $G$-complete (fuzzy) metric space.
\end{defi}

\begin{pro}\label{pr1}
\normalfont \cite{wg} Let $(X,d)$ be a metric space and $(x_n)$ a sequence in $X.$ Then

a) $(x_n)$ is $G$-Cauchy in $(X,d)$ if and only if $(x_n)$ is $G$-Cauchy in $(X,M_d,\cdot).$

b) $(X,d)$ is weak $G$-complete if and only if $(X,M_d,\cdot)$ is weak $G$-complete.
\end{pro}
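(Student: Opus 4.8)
The plan is to settle part (a) by a direct computation with the defining formula $M_d(x,y,t)=\frac{t}{t+d(x,y)}$, and then to obtain part (b) as a formal consequence of part (a) together with the already-recorded equality $\tau(d)=\tau_{M_d}$.

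For part (a), fix $t>0$ and note that
\be
M_d(x_n,x_{n+1},t)=\frac{t}{t+d(x_n,x_{n+1})}.
\ee
The map $s\mapsto\frac{t}{t+s}$ is a continuous strictly decreasing bijection of $[0,\infty)$ onto $(0,1]$ carrying $0$ to $1$; consequently, for this fixed $t$, one has $\lim_{n\to\infty}M_d(x_n,x_{n+1},t)=1$ if and only if $\lim_{n\to\infty}d(x_n,x_{n+1})=0$. For the forward implication I would assume $d(x_n,x_{n+1})\to 0$ and read off from the displayed identity that $M_d(x_n,x_{n+1},t)\to 1$ for every $t>0$; for the converse I would use that the fuzzy $G$-Cauchy condition holding for even a single value of $t$ already forces $d(x_n,x_{n+1})\to 0$. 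Thus the two notions of $G$-Cauchyness coincide.

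For part (b), recall that weak $G$-completeness asks precisely that every $G$-Cauchy sequence cluster, and that clustering is a purely topological property. Since the excerpt records $\tau(d)=\tau_{M_d}$, a sequence clusters in $(X,d)$ exactly when it clusters in $(X,M_d,\cdot)$. By part (a) the two spaces possess identical families of $G$-Cauchy sequences, so the statement ``every $G$-Cauchy sequence clusters'' holds in one space if and only if it holds in the other; this is exactly the asserted equivalence.

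There is no serious obstacle here: part (a) is an elementary estimate and part (b) is essentially bookkeeping. The only point requiring a little care is the universal quantifier over $t$ in the fuzzy $G$-Cauchy condition; I would emphasize that a single value of $t$ already recovers $d(x_n,x_{n+1})\to 0$, so that the universally quantified fuzzy condition is equivalent to the single metric condition without any loss.
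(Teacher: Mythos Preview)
The paper does not supply its own proof of this proposition; it is quoted from \cite{wg} in the preliminaries without argument. Your proof is correct and is the expected elementary one: the explicit formula $M_d(x,y,t)=\dfrac{t}{t+d(x,y)}$ reduces (a) to a one-variable limit computation, and (b) follows from (a) together with $\tau(d)=\tau_{M_d}$. One cosmetic remark: the paper's stated definition of fuzzy $G$-Cauchy contains an evident typo (the limit is written as $0$ rather than $1$); your argument uses the intended condition $\lim_{n\to\infty}M_d(x_n,x_{n+1},t)=1$, which is the correct reading.
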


A sequence $(x_n)$ in a fuzzy metric space $(X,M,*)$ is called fuzzy pseudo-Cauchy if for $\epsilon\in(0,1),t>0$ and $k\in\mathbb N$ there exist $p,q~(>k)\in\mathbb N$ with $p\ne q$ such that $M(x_p,x_q,t)>1-\epsilon$ \cite{sp}. On the other hand, a sequence $(x_n)$ in a metric space $(X,d)$ is called pseudo-Cauchy if for $\epsilon\in(0,1)$ and $k\in\mathbb N$ there exist $p,q~(>k)\in\mathbb N$ with $p\ne q$ such that $d(x_p,x_q)<\epsilon$ \cite{pcs}.

\begin{pro}\label{pr2}
\normalfont \cite{sp} Let $(X,d)$ be a metric space and $(x_n)$ a sequence in $X.$ Then $(x_n)$ is pseudo-Cauchy in $(X,d)$ if and only if $(x_n)$ is fuzzy pseudo-Cauchy in $(X,M_d,\cdot).$
\end{pro}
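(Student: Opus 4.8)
The plan is to prove both directions of the biconditional directly from the definitions, using the explicit form $M_d(x,y,t)=\frac{t}{t+d(x,y)}$ to translate the pseudo-Cauchy condition across the two settings. The key observation is that for each fixed $t>0$, the map $s\mapsto\frac{t}{t+s}$ is a strictly decreasing bijection from $[0,\infty)$ onto $(0,1]$, so an inequality of the form $d(x_p,x_q)<\epsilon$ is equivalent to $M_d(x_p,x_q,t)>\frac{t}{t+\epsilon}$, and conversely. This single algebraic equivalence is what lets us move back and forth between the two notions.

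First I would prove the forward direction. Suppose $(x_n)$ is pseudo-Cauchy in $(X,d)$. To verify fuzzy pseudo-Cauchyness, fix arbitrary $\epsilon\in(0,1)$, $t>0$ and $k\in\mathbb N$. The target condition is to produce distinct $p,q>k$ with $M_d(x_p,x_q,t)>1-\epsilon$, i.e. $\frac{t}{t+d(x_p,x_q)}>1-\epsilon$. Solving this inequality for $d(x_p,x_q)$ shows it is equivalent to $d(x_p,x_q)<\frac{t\epsilon}{1-\epsilon}$. Setting $\delta=\frac{t\epsilon}{1-\epsilon}>0$ and applying the pseudo-Cauchy hypothesis of $(X,d)$ with this $\delta$ and the given $k$ yields distinct $p,q>k$ with $d(x_p,x_q)<\delta$, which is exactly what is required.

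For the converse, I would run essentially the same computation in reverse. Assume $(x_n)$ is fuzzy pseudo-Cauchy in $(X,M_d,\cdot)$, and fix $\epsilon\in(0,1)$ and $k\in\mathbb N$; the goal is distinct $p,q>k$ with $d(x_p,x_q)<\epsilon$. Choose any convenient $t>0$ (say $t=1$) and select $\epsilon'\in(0,1)$ so that the fuzzy condition $M_d(x_p,x_q,t)>1-\epsilon'$ forces $d(x_p,x_q)<\epsilon$; the choice $\epsilon'=\frac{\epsilon}{t+\epsilon}$ works since $\frac{t}{t+d}>1-\epsilon'=\frac{t}{t+\epsilon}$ is equivalent to $d<\epsilon$. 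Applying the fuzzy pseudo-Cauchy hypothesis with this $\epsilon'$, the same $t$, and the given $k$ delivers the required indices.

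The proof is almost entirely routine, so I do not anticipate a genuine obstacle; the only point requiring a little care is the bookkeeping in translating the thresholds $\epsilon$, $\epsilon'$, $\delta$ through the monotone bijection and ensuring all chosen quantities remain in the legal ranges ($\epsilon'\in(0,1)$ and $\delta>0$). In fact, since the whole argument is symmetric under the equivalence $d(x_p,x_q)<\delta\iff M_d(x_p,x_q,t)>1-\epsilon$ for matched thresholds, one could streamline the write-up by first isolating this elementary equivalence as a single displayed observation and then invoking it in both directions, rather than repeating the algebra.
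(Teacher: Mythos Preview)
Your argument is correct and is exactly the natural one: the strictly decreasing bijection $s\mapsto\frac{t}{t+s}$ converts the metric threshold into a fuzzy-metric threshold and back, and the bookkeeping you outline for $\delta=\frac{t\epsilon}{1-\epsilon}$ and $\epsilon'=\frac{\epsilon}{t+\epsilon}$ is accurate. The only cosmetic point is that the paper's definition of (metric) pseudo-Cauchy quantifies over $\epsilon\in(0,1)$ rather than all $\epsilon>0$, so in the forward direction your $\delta$ may fall outside $(0,1)$; this is handled trivially by replacing $\delta$ with $\min\{\delta,\tfrac12\}$, and you already flag exactly this kind of range check as the one place needing care.

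As for comparison with the paper: there is nothing to compare. The paper does not prove Proposition~\ref{pr2}; it simply quotes the result from \cite{sp} in the preliminaries. Your direct computation is the standard proof and would be the expected content of any reference for this fact.
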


\section{\textbf{Main Results}}

We begin with the characterizations of weak $G$-complete (fuzzy) metric spaces. To meet our requirement, we first extend the notion of cofinally Cauchy sequences in fuzzy metric setting.

Howes \cite{cc1} introduced the notion of cofinally Cauchy sequence by replacing the condition of residuality with cofinality in the definition of
Cauchy sequence. A sequence $(x_n)$ in a metric space $(X,d)$ is called cofinally Cauchy if for $\epsilon>0$ there is an infinite subset $\mathbb N_\epsilon$ of $\mathbb N$ such that $d(x_p,x_q)<\epsilon,~\forall~p,q\in\mathbb N_\epsilon.$ If every cofinally Cauchy sequence in $(X,d)$ clusters, then $(X,d)$ is called cofinally complete.

\begin{defi}
\normalfont A sequence $(x_n)$ in a fuzzy metric space $(X,M,*)$ is said to be fuzzy cofinally Cauchy if for $\epsilon\in(0,1)$ and $t>0$ there is an infinite subset $\mathbb N_\epsilon$ of $\mathbb N$ such that $M(x_p,x_q,t)>1-\epsilon,~\forall~p,q\in\mathbb N_\epsilon.$ 
\end{defi}

The following is an easy consequence:

\begin{pro}\label{pr3}
\normalfont Let $(X,d)$ be a metric space and $(x_n)$ be a sequence in $X.$ Then $(x_n)$ is cofinally Cauchy in $(X,d)$ if and only if $(x_n)$ is cofinally Cauchy in $(X,M_d,\cdot).$
\end{pro}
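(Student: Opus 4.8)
The plan is to reduce everything to the elementary algebraic identity relating the standard fuzzy metric $M_d$ to the underlying metric $d$. For $x,y\in X$, $t>0$ and $\epsilon\in(0,1)$, a direct computation from $M_d(x,y,t)=\frac{t}{t+d(x,y)}$ gives
\[
M_d(x,y,t)>1-\epsilon \iff d(x,y)<\frac{\epsilon t}{1-\epsilon}.
\]
This is the single tool I would use; the remainder is a matter of bookkeeping with quantifiers, following the same template as Propositions \ref{pcau}, \ref{pr1} and \ref{pr2}.

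For the forward implication, suppose $(x_n)$ is cofinally Cauchy in $(X,d)$. Given $\epsilon\in(0,1)$ and $t>0$, I would set $\delta=\frac{\epsilon t}{1-\epsilon}>0$ and apply the hypothesis to obtain an infinite set $\mathbb N_\delta\subset\mathbb N$ with $d(x_p,x_q)<\delta$ for all $p,q\in\mathbb N_\delta$. By the displayed equivalence this is exactly $M_d(x_p,x_q,t)>1-\epsilon$ for all $p,q\in\mathbb N_\delta$, so $\mathbb N_\epsilon=\mathbb N_\delta$ witnesses that $(x_n)$ is fuzzy cofinally Cauchy.

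For the converse, suppose $(x_n)$ is fuzzy cofinally Cauchy in $(X,M_d,\cdot)$. Given $\delta>0$, I would fix $t=1$ and choose $\epsilon=\frac{\delta}{1+\delta}\in(0,1)$, which is precisely the value making $\frac{\epsilon t}{1-\epsilon}=\delta$. Applying the hypothesis to this pair $(\epsilon,t)$ yields an infinite set on which $M_d(x_p,x_q,1)>1-\epsilon$, and the equivalence turns this into $d(x_p,x_q)<\delta$, as required.

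The only point needing care---and the nearest thing to an obstacle---is the asymmetry in the quantifiers: the metric definition carries a single parameter $\epsilon$, whereas the fuzzy definition carries two, namely $\epsilon$ and $t$. The computation above shows that the pair $(\epsilon,t)$ enters only through the single quantity $\frac{\epsilon t}{1-\epsilon}$, so fixing $t$ (say $t=1$) loses no generality and the two families of conditions match up bijectively. With this observation the equivalence is immediate.
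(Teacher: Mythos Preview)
Your argument is correct and is exactly the natural computation the paper has in mind; the paper itself omits the proof entirely, stating only that the result is ``an easy consequence'' of the definitions. Your reduction via the identity $M_d(x,y,t)>1-\epsilon\iff d(x,y)<\frac{\epsilon t}{1-\epsilon}$ is the standard way to pass between the metric and the induced fuzzy metric, and the handling of the extra parameter $t$ by fixing $t=1$ in the converse direction is the right observation.
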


\begin{theo}\label{th1}
\normalfont Let $(X,M,*)$ be a fuzzy metric space. Then the following conditions are equivalent:

(a) $(X,M,*)$ is weak $G$-complete.

(b) Each real-valued continuous function on $(X,\tau_M)$ carries a $G$-Cauchy sequence of $(X,M,*)$ to a cofinally Cauchy sequence of $\mathbb R$ (endowed with the usual metric).

(c) Each real-valued continuous function on $(X,\tau_M)$ carries a $G$-Cauchy sequence of $(X,M,*)$ to a pseudo-Cauchy sequence of $\mathbb R$ (endowed with the usual metric).
\end{theo}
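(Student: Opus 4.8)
The plan is to establish the cycle $(a)\Rightarrow(b)\Rightarrow(c)\Rightarrow(a)$, with the last implication carrying essentially all the weight.

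For $(a)\Rightarrow(b)$, let $f\colon(X,\tau_M)\to\mathbb R$ be continuous and let $(x_n)$ be a $G$-Cauchy sequence in $(X,M,*)$. Weak $G$-completeness gives a cluster point $x$ of $(x_n)$, and since $(X,\tau_M)$ is first countable this cluster point is the limit of a subsequence $(x_{n_k})$. Continuity of $f$ then yields $f(x_{n_k})\to f(x)$ in $\mathbb R$. Hence, given $\epsilon>0$, choosing $K$ so that $|f(x_{n_k})-f(x)|<\epsilon/2$ for $k\ge K$, the infinite set $\mathbb N_\epsilon=\{n_k:k\ge K\}$ satisfies $|f(x_p)-f(x_q)|<\epsilon$ for all $p,q\in\mathbb N_\epsilon$; that is, $(f(x_n))$ is cofinally Cauchy in $\mathbb R$. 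The implication $(b)\Rightarrow(c)$ is immediate, since every cofinally Cauchy sequence of reals is pseudo-Cauchy: for prescribed $\epsilon>0$ and $k\in\mathbb N$, the infinite set $\mathbb N_\epsilon$ contains two distinct indices exceeding $k$, whose images are within $\epsilon$.

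The substantive direction is $(c)\Rightarrow(a)$, which I would prove by contraposition. Assume $(X,M,*)$ is not weak $G$-complete and fix a $G$-Cauchy sequence $(x_n)$ with no cluster point. By the metrization theorem recalled above, I would fix a compatible metric $\rho$ on $(X,\tau_M)$; in these terms $G$-Cauchyness reads $\rho(x_n,x_{n+1})\to0$, while the absence of a cluster point forces the range $A=\{x_n:n\in\mathbb N\}$ to be an infinite, closed and discrete subset of $X$ (any accumulation point of $A$ would be a cluster point of the sequence, and if some value occurred infinitely often it would already be a limit of a constant subsequence). The goal is to manufacture a continuous $f\colon X\to\mathbb R$ carrying $(x_n)$ to a sequence that is \emph{not} pseudo-Cauchy, i.e.\ one whose terms are eventually $\epsilon$-separated across distinct indices; the natural target is a strictly increasing, integer-separated family of values escaping to $+\infty$. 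Since metrizable spaces are normal and $A$ is closed, the Tietze extension theorem will deliver such an $f$ once admissible target values are assigned on $A$, completing the contrapositive.

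The main obstacle is precisely this last assignment. Because any continuous $f$ is constant on each value of the sequence, $(f(x_n))$ reproduces every repetition of $(x_n)$: if some value recurred at arbitrarily large indices $p\ne q$, then $(f(x_n))$ would be pseudo-Cauchy for \emph{every} $f$, and no witness to the failure of $(c)$ could be built from this sequence. Thus the crux is to arrange the witnessing $G$-Cauchy non-clustering sequence to have eventually pairwise distinct terms. This requires care, since subsequences of $G$-Cauchy sequences need not be $G$-Cauchy, and the naive ``first occurrence'' subsequence can fail to be $G$-Cauchy. I would therefore extract an eventually injective, non-clustering, $G$-Cauchy sequence from $(x_n)$ by a greedy selection that keeps consecutive steps small, exploiting both $\rho(x_n,x_{n+1})\to0$ and the discreteness of $A$, and only then enumerate its range as $\{a_1,a_2,\dots\}$, set $f(a_i)=i$, and extend by Tietze; along this sequence $f$ takes integer values with unit-separated distinct entries, so its image is not pseudo-Cauchy. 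Establishing that such an eventually injective $G$-Cauchy refinement always exists is the point at which I expect the real difficulty to lie.
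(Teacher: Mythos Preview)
Your overall architecture matches the paper's: the cycle $(a)\Rightarrow(b)\Rightarrow(c)\Rightarrow(a)$, with the last step done by contraposition via a closed discrete range and a Tietze extension. The implications $(a)\Rightarrow(b)$ and $(b)\Rightarrow(c)$ are fine and essentially identical to the paper.

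There is, however, a genuine gap in $(c)\Rightarrow(a)$, and it sits exactly where you predicted. First, the sentence ``in these terms $G$-Cauchyness reads $\rho(x_n,x_{n+1})\to0$'' is not justified: the metrization theorem only gives a metric $\rho$ inducing $\tau_M$, and nothing guarantees that fuzzy $G$-Cauchyness $M(x_n,x_{n+1},t)\to1$ translates into $\rho(x_n,x_{n+1})\to0$ for that particular $\rho$. Your proposed greedy selection is explicitly built on this claim, so as stated it does not go through. Second, and more importantly, you stop short of actually producing the eventually injective $G$-Cauchy refinement; you flag it as the hard point but leave it open.

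The paper resolves this cleanly and without ever passing to a compatible metric for the $G$-Cauchy manipulation. Assuming $(x_n)$ has no constant subsequence (so each value occurs only finitely often), set $r_1=1$ and, inductively, $r_{n+1}=\max\{m:x_m=x_{r_n+1}\}$. Then $x_{r_{n+1}}=x_{r_n+1}$ as \emph{values}, whence
\[
M(x_{r_n},x_{r_{n+1}},t)=M(x_{r_n},x_{r_n+1},t)\longrightarrow 1,
\]
so $(x_{r_n})$ remains $G$-Cauchy; and since each $r_n$ is the last index at which its value appears, the terms are distinct. From there your plan works verbatim: the range is closed and discrete in $(X,\tau_M)$, one prescribes $f(x_{r_n})=2^n$ (or $n$, as you suggest), extends by Tietze using only normality of the metrizable topology, and obtains an image that is not pseudo-Cauchy. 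Note that the metric $\rho$ is needed solely for normality/Tietze, not for the sequence surgery.
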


\begin{proof}

\normalfont (a)$\implies$(b): Let $f:(X,\tau_M)\to\mathbb R$ be a continuous function. Choose a $G$-Cauchy sequence $(x_n)$ in $(X,M,*).$ Since $(X,M,*)$ is weak $G$-complete, $(x_n)$ clusters in $(X,\tau_M).$ Recall that $(X,\tau_M)$ is first countable. So there exists a subsequence $(x_{r_n})$ of $(x_n)$ that converges in $(X,\tau_M).$ Since $f$ is continuous, $(f(x_{r_n}))$ is Cauchy in $\mathbb R$, and consequently, $(f(x_n))$ is cofinally Cauchy in $\mathbb R.$

(b)$\implies$(c): Immediate.

(c)$\implies$(a): Let $(x_n)$ be a $G$-Cauchy sequence in $(X,M,*).$ If $(x_n)$ has a constant subsequence, then we are done. So, let us assume that $(x_n)$ has no constant subsequence. We first prove that $(x_n)$ has a $G$-Cauchy subsequence $(x_{r_n})$ in $(X,M,*)$ of distinct terms.

Set $r_1=1$ and $r_{n+1}=\max\{m\in\mathbb N:x_m=x_{r_n+1}\},~\forall~n\in\mathbb N.$ Since $(x_n)$ has no constant subsequence, $r_{n+1}$ exists, $\forall~n\in\mathbb N.$ Thus $(x_{r_n})$ defines a sequence of distinct terms.

Since $(x_n)$ is $G$-Cauchy, $\lim\limits_{n\to\infty}M(x_n,x_{n+1},t)=0,~\forall~t>0$ whence, $\lim\limits_{n\to\infty}M(x_{r_n},$ $x_{r_{(n+1)}},t)=0,~\forall~t>0.$ Thus $(x_{r_n})$ is a $G$-Cauchy subsequence of $(x_n)$ having distinct terms in $(X,M,*)$.

If possible, let $(x_{r_n})$ does not cluster in $(X,\tau_M)$. Then $A=\{x_{r_n}:n\in\mathbb N\}$ is a closed and discrete subset of $(X,\tau_M).$ Define $f:A\to\mathbb R$ by $f(x_{r_n})=2^n,~\forall~n\in\mathbb N.$ Clearly $f$ is continuous on $(X,\tau_M)$. Since $A$ is closed on $(X,\tau_M)$, by Tietze's extension theorem, $f$ extends to a continuous function $h$ on $(X,\tau_M).$ Note $(x_{r_n})$ is $G$-Cauchy in $(X,M,*)$ but $(h(x_{r_n}))$ is not pseudo-Cauchy in $\mathbb R$, a contradiction. Consequently $(x_{r_n}),$ and hence $(x_n),$ clusters in $(X,\tau_M).$

Thus $(X,d)$ is weak $G$-complete$.$
\end{proof}

In view of Proposition \ref{pr1}, the following corollary is obvious:

\begin{cor}
\normalfont Let $(X,d)$ be a metric space. Then the following conditions are equivalent:

(a) $(X,d)$ is weak $G$-complete.

(b) Each real-valued continuous function on $(X,d)$ carries a $G$-Cauchy sequence of $(X,d)$ to a cofinally Cauchy sequence of $\mathbb R$ (endowed with the usual metric).

(c) Each real-valued continuous function on $(X,d)$ carries a $G$-Cauchy sequence of $(X,d)$ to a pseudo-Cauchy sequence of $\mathbb R$ (endowed with the usual metric).
\end{cor}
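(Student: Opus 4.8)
The plan is to deduce this directly from Theorem \ref{th1} applied to the induced fuzzy metric space $(X,M_d,\cdot)$, using the dictionary between metric and fuzzy-metric notions recorded in the Preliminaries.

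First I would recall the two ingredients that make the translation exact. On the domain side, the metric topology $\tau(d)$ coincides with $\tau_{M_d}$, so a function $f\colon X\to\mathbb R$ is continuous on $(X,d)$ if and only if it is continuous on $(X,\tau_{M_d})$; moreover, by Proposition \ref{pr1}(a), a sequence $(x_n)$ is $G$-Cauchy in $(X,d)$ if and only if it is $G$-Cauchy in $(X,M_d,\cdot)$. On the target side nothing needs translating at all: conditions (b) and (c) speak of cofinally Cauchy and pseudo-Cauchy sequences in $\mathbb R$ with its usual metric, a fixed classical notion that is insensitive to whether the domain is viewed as a metric or as a fuzzy metric space.

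Consequently, the three statements (a)--(c) of the corollary for $(X,d)$ are, term by term, the same as the three statements (a)--(c) of Theorem \ref{th1} for $(X,M_d,\cdot)$: the continuous real-valued functions are the same maps, the relevant $G$-Cauchy sequences are the same sequences, and the image sequences in $\mathbb R$ together with the properties demanded of them are identical. For (a) itself I would invoke Proposition \ref{pr1}(b), which asserts that $(X,d)$ is weak $G$-complete precisely when $(X,M_d,\cdot)$ is. Applying Theorem \ref{th1} to $(X,M_d,\cdot)$ then yields the equivalence of (a), (b) and (c) for $(X,d)$.

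I expect no genuine obstacle here --- this is exactly why the authors call the corollary \emph{obvious}. The only point requiring a moment's care is to confirm that each clause matches under the correspondence, rather than invoking a separate transfer lemma for every property; in particular one should note that the target notions live in $\mathbb R$ and hence need no appeal to Proposition \ref{pr3}, while the domain-side matching of continuity and of $G$-Cauchyness rests on the topology identity $\tau(d)=\tau_{M_d}$ and on Proposition \ref{pr1}(a) respectively.
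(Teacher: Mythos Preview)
Your proposal is correct and follows exactly the route the paper indicates: the corollary is declared obvious in view of Proposition~\ref{pr1}, and you have simply spelled out the dictionary (via $\tau(d)=\tau_{M_d}$ and Proposition~\ref{pr1}) that makes Theorem~\ref{th1} for $(X,M_d,\cdot)$ read literally as the metric statement. There is nothing to add or correct.
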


\begin{theo}
\normalfont A closed subspace of a weak $G$-complete fuzzy metric space is weak $G$-complete.
\end{theo}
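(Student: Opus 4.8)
The plan is to take an arbitrary $G$-Cauchy sequence in the closed subspace, transport it to the ambient space where weak $G$-completeness can be applied, extract a cluster point there, and then use closedness to pull that cluster point back into the subspace.

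First I would fix a weak $G$-complete fuzzy metric space $(X,M,*)$ together with a subset $A$ that is closed in $(X,\tau_M)$, and form the fuzzy metric subspace $(A,M_A,*)$. Let $(x_n)$ be a $G$-Cauchy sequence in $(A,M_A,*)$. Since $M_A$ is simply the restriction $M|_{A\times A\times(0,\infty)}$, we have $M_A(x_n,x_{n+1},t)=M(x_n,x_{n+1},t)$ for every $t>0$, so the defining limit condition is unchanged and $(x_n)$ is $G$-Cauchy in the whole space $(X,M,*)$ as well.

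Next, invoking weak $G$-completeness of $(X,M,*)$, the sequence $(x_n)$ clusters at some $x\in X$ with respect to $\tau_M$. The key step is to verify that $x$ in fact lies in $A$: every $\tau_M$-neighborhood of $x$ contains $x_n$ for infinitely many $n$, and each $x_n$ belongs to $A$, so every such neighborhood meets $A$; hence $x\in\overline{A}=A$, the last equality holding precisely because $A$ is closed. This is the only place where closedness of $A$ is used, and it is the single point of the argument that genuinely needs attention.

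Finally I would confirm that $x$ remains a cluster point of $(x_n)$ in the subspace. Recalling that $\tau_{M_A}=(\tau_M)_A$, a basic neighborhood of $x$ in $(A,\tau_{M_A})$ has the form $U\cap A$ with $U$ a $\tau_M$-neighborhood of $x$ in $X$; since $U$ contains $x_n$ for infinitely many $n$ and all the $x_n$ lie in $A$, the set $U\cap A$ does too. Thus $(x_n)$ clusters in $(A,M_A,*)$, and since $(x_n)$ was an arbitrary $G$-Cauchy sequence, $(A,M_A,*)$ is weak $G$-complete. The argument is essentially routine topological bookkeeping once the correct notion of clustering is transferred between the space and its subspace.
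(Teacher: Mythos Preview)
Your proof is correct and follows essentially the same approach as the paper's: lift the $G$-Cauchy sequence to $(X,M,*)$, obtain a cluster point there by weak $G$-completeness, use closedness of $A$ to place it in $A$, and conclude it is a cluster point in the subspace. You simply spell out in more detail the passage from $\tau_M$-clustering to $\tau_{M_A}$-clustering, which the paper leaves implicit.
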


\begin{proof}
\normalfont Let $A$ be a closed subset of a weak $G$-complete fuzzy metric space $(X,M,*).$ Choose a $G$-Cauchy sequence $(x_n)$ in $(A,M_A,*).$ Then $(x_n)$ is $G$-Cauchy in $(X,M,*)$ and hence has a cluster point $c$ in $(X,\tau_M)$. Since $A$ is closed in $(X,\tau_M),$ so $c\in A.$ Thus $c$ becomes a cluster point of $(x_n)$ in $(A,M_A,*).$ Hence $(A,M_A,*)$ is weak $G$-complete.
\end{proof}

In view of Proposition \ref{pr1}, the following corollary is obvious:

\begin{cor}
\normalfont A closed subspace of a weak $G$-complete metric space is weak $G$-complete.
\end{cor}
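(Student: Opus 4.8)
The plan is to reduce the metric statement to its fuzzy counterpart, which has already been established, by passing through the standard fuzzy metric $M_d$. First I would invoke Proposition \ref{pr1}(b): since $(X,d)$ is weak $G$-complete, the associated fuzzy metric space $(X,M_d,\cdot)$ is weak $G$-complete as well.

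Next I would reconcile the two subspace structures. Recall from the preliminaries that $\tau(d)=\tau_{M_d}$, so the closedness of $A$ in $(X,d)$ is precisely the closedness of $A$ in $(X,\tau_{M_d})$. Moreover, for $x,y\in A$ one has $(M_d)_A(x,y,t)=\frac{t}{t+d(x,y)}=\frac{t}{t+d_A(x,y)}=M_{d_A}(x,y,t)$, so the fuzzy metric subspace $(A,(M_d)_A,\cdot)$ coincides with the fuzzy metric $(A,M_{d_A},\cdot)$ induced by the restricted metric $d_A$.

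Then I would apply the theorem just proved, namely that a closed subspace of a weak $G$-complete fuzzy metric space is weak $G$-complete, to the closed set $A$ in $(X,M_d,\cdot)$, concluding that $(A,M_{d_A},\cdot)=(A,(M_d)_A,\cdot)$ is a weak $G$-complete fuzzy metric space. Finally, a second application of Proposition \ref{pr1}(b), now to the metric space $(A,d_A)$ and its associated fuzzy metric, yields that $(A,d_A)$ is weak $G$-complete, as desired.

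Since every step is either a direct translation between the metric and fuzzy settings or an appeal to an already-established result, there is no genuine obstacle here; the only point requiring care is the routine verification that restricting the metric first and then fuzzifying gives the same object as fuzzifying first and then restricting, i.e. $M_{d_A}=(M_d)_A$, together with the coincidence $\tau(d)=\tau_{M_d}$ that guarantees the two notions of closedness agree.
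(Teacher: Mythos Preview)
Your proposal is correct and matches the paper's approach exactly: the paper simply states that the corollary is obvious in view of Proposition~\ref{pr1}, and your argument spells out precisely this reduction---pass to $(X,M_d,\cdot)$ via Proposition~\ref{pr1}(b), apply the fuzzy theorem to the closed subspace, and return via Proposition~\ref{pr1}(b). Your extra care in verifying $(M_d)_A=M_{d_A}$ and the coincidence of closedness under $\tau(d)=\tau_{M_d}$ is the routine bookkeeping the paper leaves implicit.
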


Since a weak $G$-complete (fuzzy) metric space is complete, it is natural to ask under which conditions the completion of a (fuzzy) metric space is weak $G$-complete. In what follows, we give an answer to this. To establish the main result we require a lemma that involves the notion of Cauchy-continuous map for fuzzy metric spaces.

Recall that given two metric spaces $(X,d)$ and $(Y,\rho),$ a mapping $f:X\to Y$ is Cauchy-continuous if $f$ takes every Cauchy sequence of $X$ to a Cauchy sequence of $Y.$ The natural extension of this notion for fuzzy metric spaces is as follows.

\begin{defi}
\normalfont Let $(X,M,*)$ and $(Y,N,\star)$ be two fuzzy metric spaces and $A\subset X.$ A mapping $f:A\to Y$ is called fuzzy Cauchy-continuous if $f$ takes every Cauchy sequence of $(A,M_A,*)$ to a Cauchy sequence of $(Y,N,\star).$ 

Clearly if $f:A\to Y$ is fuzzy Cauchy-continuous, then $f$ is continuous as a mapping from $(A,\tau_{M_A})$ to $(Y,\tau_N).$ 
\end{defi}

\begin{lem}\label{prex}
\normalfont Let $A$ be a non-empty subset of a fuzzy metric space $(X,M,*)$ having fuzzy metric completion $(\tilde{X},\tilde{M},\tilde{*})$ and $f:(A,M_A,*)\to\mathbb R$ be a fuzzy Cauchy-continuous map. Then $f$ extends to a fuzzy Cauchy-continuous map $\overline{f}:(X,M,*)\to\mathbb R.$ (Here $\mathbb R$ is endowed with the standard fuzzy metric induced by the usual metric)
\end{lem}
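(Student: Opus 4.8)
The plan is to use the completion $(\tilde X,\tilde M,\tilde *)$ as an ambient \emph{complete} space containing isometric copies of both $A$ and $X$, and thereby reduce the extension problem for fuzzy Cauchy-continuous maps to one for merely continuous maps. The guiding observation is that inside a complete space every Cauchy sequence converges, so continuity of an extension will automatically upgrade to fuzzy Cauchy-continuity on $X$. Identifying $X$ with its dense isometric image in $\tilde X$, I regard $A\subset X\subset\tilde X$, noting that under an isometry a sequence is Cauchy in $(A,M_A,*)$, or in $(X,M,*)$, precisely when it is Cauchy as a sequence in $(\tilde X,\tilde M,\tilde *)$.

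First I would extend $f$ from $A$ to its closure $\overline A$ taken in $(\tilde X,\tau_{\tilde M})$. Given $z\in\overline A$, choose $(a_n)$ in $A$ with $a_n\to z$ in $\tilde X$; then $(a_n)$ is Cauchy in $(A,M_A,*)$, so by fuzzy Cauchy-continuity $(f(a_n))$ is Cauchy in $\mathbb R$ and hence convergent, and I set $\hat f(z)=\lim_n f(a_n)$. Well-definedness follows by interlacing: if $(b_n)$ is another sequence in $A$ with $b_n\to z$, then $a_1,b_1,a_2,b_2,\dots$ still converges to $z$, is therefore Cauchy in $A$, and its image is Cauchy in $\mathbb R$, forcing $\lim_n f(a_n)=\lim_n f(b_n)$. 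A routine $\epsilon$-argument (using first countability of $\tau_{\tilde M}$, so that continuity may be tested sequentially) shows that $\hat f:\overline A\to\mathbb R$ is continuous and restricts to $f$ on $A$.

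Next, since $(\tilde X,\tau_{\tilde M})$ is metrizable (being a fuzzy metric space) and $\overline A$ is closed in it, Tietze's extension theorem yields a continuous extension $F:(\tilde X,\tau_{\tilde M})\to\mathbb R$ of $\hat f$, and I would put $\overline f=F|_X$. As $f=\hat f=F$ on $A$, this $\overline f$ extends $f$. To see that $\overline f$ is fuzzy Cauchy-continuous, let $(x_n)$ be Cauchy in $(X,M,*)$; then $(x_n)$ is Cauchy in the complete space $(\tilde X,\tilde M,\tilde *)$, so $x_n\to p$ for some $p\in\tilde X$, and continuity of $F$ gives $\overline f(x_n)=F(x_n)\to F(p)$. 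In particular $(\overline f(x_n))$ is convergent, hence Cauchy, in $\mathbb R$, which is exactly what is required.

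The main obstacle will be the first step: constructing $\hat f$ on $\overline A$ and verifying that it is both well defined and continuous. Equally important is to recognize \emph{why} the completion hypothesis is indispensable — it is precisely what converts the continuity supplied by Tietze into Cauchy-continuity. Without embedding $X$ into a complete space, a continuous (even Tietze) extension defined on $X$ alone need not carry Cauchy sequences of $X$ to Cauchy sequences of $\mathbb R$, since such sequences may fail to converge within $X$ and instead ``escape'' toward limit points lying outside $X$.
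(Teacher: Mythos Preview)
Your proposal is correct and follows essentially the same route as the paper: extend $f$ to the closure of $A$ in $\tilde X$ via limits along sequences (well-definedness by interlacing), apply Tietze to get a continuous $F$ on all of $\tilde X$, and then use completeness of $\tilde X$ to upgrade continuity of $F$ to fuzzy Cauchy-continuity before restricting to $X$. The only cosmetic difference is that the paper verifies the intermediate extension on $\overline A$ is fuzzy Cauchy-continuous rather than merely continuous, but since $\overline A$ is closed in the complete space $\tilde X$ these notions coincide there, so your streamlining is harmless.
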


\begin{proof}
\normalfont Let $\phi:(X,M,*)\to(\tilde{X},\tilde{M},\tilde{*})$ be an isometry such that $\phi(X)$ is dense in $(\tilde{X},\tau_{\tilde{M}}).$ Clearly $\phi$ is injective. 

Define $g:\phi(A)\to\mathbb R$ such that $g=f\phi^{-1}.$ Clearly $g$ is fuzzy Cauchy-continuous on $(\phi(A),\tilde{M}_{\phi(A)},\tilde{*}).$

We claim that $g$ extends to a fuzzy Cauchy-continuous map $g_*:\overline{\phi(A)}\to\mathbb R.$ 

Choose $b\in\overline{\phi(A)}.$ Since $(\tilde{X},\tau_{\tilde{M}})$ is first countable, there exists a sequence $(b_n)$ in $\phi(A)$ such that $\lim\limits_{n\to\infty}b_n=b$ in $(\tilde{X},\tau_{\tilde{M}}).$ Since $(b_n)$ is Cauchy in $\phi(A),$ so is $(g(b_n))$ in $\mathbb R,$ and consequently, $\lim\limits_{n\to\infty}g(b_n)$ exists.

Define $g_*:\overline{\phi(A)}\to\mathbb R$ by $g_*(c)=\lim\limits_{n\to\infty}g(c_n),~\forall~c\in\overline{\phi(A)}$ where $(c_n)$ is a sequence in $\phi(A)$ such that $\lim\limits_{n\to\infty}c_n=c$ in $(\tilde{X},\tau_{\tilde{M}}).$ Existence of such a sequence $(c_n)$ is ensured from the previous argument.

Note that $g_*$ is well-defined in the sense that for any two sequences $(r_n)$ and $(s_n)$ in $\phi(A)$ converging to the same point $d\in\overline{\phi(A)}$ we have $\lim\limits_{n\to\infty}g(r_n)=\lim\limits_{n\to\infty}g(s_n).$ Indeed $(r_1,s_1,r_2,s_2,r_3,s_3,\cdots)$ is Cauchy in $\phi(A)\implies\left(g(r_1),g(s_1),g(r_2),g(s_2),g(r_3),g(s_3),\cdots\right)$ is convergent in $\mathbb R,$ and consequently, $\lim\limits_{n\to\infty}g(r_n)=\lim\limits_{n\to\infty}g(s_n).$

We now show that $g_*$ is fuzzy Cauchy-continuous.

Let $(y^n)$ be a Cauchy sequence in $\overline{\phi(A)}.$ Then for each $n\in\mathbb N,$ there is a sequence $\left(x^n_k\right)_k$ in $\phi(A)$ such that $y^n=\lim\limits_{k\to\infty}x^n_k$ in $(\tilde{X},\tau_{\tilde{M}}).$ Consequently $g_*(y^n)=\lim\limits_{k\to\infty}g(x^n_k)$ in $\mathbb R,~\forall~n\in\mathbb N.$

So for each $n\in\mathbb N\backslash\{1\},$ there exists $p_n\in\mathbb N$ such that $\tilde{M}(y^n,x^n_k,\frac{1}{n})>1-\frac{1}{n},$ and $|g_*(y^n)-g(x^n_k)|<\frac{1}{n},$ $\forall~k\ge p_n.$

Set $z_n=x^n_{p_n},~\forall~n\in\mathbb N.$

Choose $\epsilon_0\in(0,1),t_0>0.$ Find $k\in\mathbb N$ such that $\frac{1}{k}<\min\{\epsilon_0,t_0\}.$

Then $\tilde{M}(y^n,z_n,t_0)\ge \tilde{M}(y^n,z_n,\frac{1}{n})>1-\frac{1}{n}>1-\epsilon_0,~\forall~n\ge k.$

Thus $\forall~t>0,$ $\tilde{M}(y^n,z_n,t)\to1$ and also $|g_*(y^n)-g(z_n)|\to0$ as $n\to\infty.$

Choose $\epsilon\in(0,1),t>0.$ Since $\tilde{*}$ is continuous, there exists $\delta\in(0,1)$ such that $(1-\delta)\tilde{*}(1-\delta)\tilde{*}(1-\delta)>1-\epsilon.$

Find $q\in\mathbb N$ such that $\tilde{M}(z_n,y^n,\frac{t}{3})>1-\delta$ and $\tilde{M}(y^m,y^n,\frac{t}{3})>1-\delta,~\forall~m,n\ge q.$

Then $\tilde{M}(z_m,z_n,t)\ge\tilde{M}(z_m,y^m,\frac{t}{3})\tilde{*}\tilde{M}(y^m,y^n,\frac{t}{3})\tilde{*}\tilde{M}(z_n,y^n,\frac{t}{3})\ge(1-\delta)\tilde{*}(1-\delta)\tilde{*}(1-\delta)>1-\epsilon,~\forall~m,n\ge q.$

Thus $(z_n)$ is Cauchy in $\phi(A)\implies(g(z_n))$ is Cauchy in $\mathbb R\implies (g_*(y^n))$ is Cauchy in $\mathbb R.$

Consequently $g_*$ is fuzzy Cauchy-continuous.

Since $g_*|_{\phi(A)}=g,$ so $g$ extends to a fuzzy Cauchy-continuous map $g_*:\overline{\phi(A)}\to\mathbb R.$ 

Then by Tietze extension theorem, $g_*$ extends to a continuous function $\overline{g}:\tilde{X}\to\mathbb R.$

Since $\tilde{X}$ is complete, $\overline{g}$ is Cauchy-continuous.

Let us now define $\overline{f}:X\to\mathbb R$ by $\overline{f}=\overline{g}\phi.$ Then $\overline{f}$ is clearly an extension of $f$ which is fuzzy Cauchy-continuous.
\end{proof}

\begin{theo}\label{th33}
\normalfont Let $(X,M,*)$ be a fuzzy metric space having a fuzzy metric completion $(\tilde{X},\tilde{M},\tilde{*})$. Then the following conditions are equivalent:

(a) $(\tilde{X},\tilde{M},\tilde{*})$ is weak $G$-complete.

(b) Every complete subset (as a fuzzy metric subspace) of $X$ is weak $G$-complete.

(c) Given any fuzzy metric space $(Y,N,\star)$ and a fuzzy Cauchy-continuous map $f:(X,M,*)\to(Y,N,\star),$ $f$ takes a $G$-Cauchy sequence of $(X,M,*)$ to a confinally Cauchy sequence of $(Y,N,\star)$.

(d) Given a pseudo Cauchy-continuous map $f:(X,M,*)\to\mathbb R$ where $\mathbb R$ is endowed with the standard fuzzy metric induced by the usual metric, $f$ takes a $G$-Cauchy sequence of $(X,M,*)$ to a confinally Cauchy sequence of $\mathbb R$.

(e) Every $G$-Cauchy sequence in $(X,M,*)$ has a Cauchy subsequence.
\end{theo}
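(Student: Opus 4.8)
The plan is to make condition (e) the hub of the argument and to build two cycles that both pass through (a) and (e): namely $(a)\Rightarrow(b)\Rightarrow(e)\Rightarrow(a)$ and $(a)\Rightarrow(c)\Rightarrow(d)\Rightarrow(e)$. Throughout I would fix an isometry $\phi:(X,M,*)\to(\tilde X,\tilde M,\tilde *)$ onto a dense subspace of the completion, and use that $\phi$ preserves $M$-values, so that it transports $G$-Cauchy, Cauchy and cofinally Cauchy sequences faithfully between $X$ and $\phi(X)$. (I read condition (d) as the $\mathbb R$-valued specialization of (c), i.e.\ concerning fuzzy Cauchy-continuous maps $X\to\mathbb R$, this being the setting of Lemma \ref{prex}.)

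First I would settle $(e)\Leftrightarrow(a)$. For $(a)\Rightarrow(e)$, a $G$-Cauchy sequence $(x_n)$ in $X$ pushes forward to a $G$-Cauchy sequence $(\phi(x_n))$ in $\tilde X$, which clusters by weak $G$-completeness; first countability of $\tau_{\tilde M}$ yields a convergent, hence Cauchy, subsequence, and pulling back through $\phi$ gives a Cauchy subsequence of $(x_n)$. For $(e)\Rightarrow(a)$, given a $G$-Cauchy sequence $(\tilde x_n)$ in $\tilde X$, I would use density together with the monotonicity of $\tilde M$ in its third argument to choose $x_n\in X$ with $\tilde M(\phi(x_n),\tilde x_n,t)\to 1$ for every $t>0$; continuity of $\tilde *$ and the triangle inequality then force $(\phi(x_n))$, and hence $(x_n)$, to be $G$-Cauchy. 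By (e) it has a Cauchy subsequence, which converges in the complete space $\tilde X$, and the asymptotic choice drags the corresponding subsequence of $(\tilde x_n)$ to the same limit, so $(\tilde x_n)$ clusters.

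The arrows $(a)\Rightarrow(b)$ and $(a)\Rightarrow(c)$ are then quick. For (b): a $G$-Cauchy sequence in a complete subspace $A$ has, by (a)/(e), a Cauchy subsequence, which converges inside $A$ by completeness, so $A$ is weak $G$-complete. For (c): given a fuzzy Cauchy-continuous $f:X\to Y$ and a $G$-Cauchy $(x_n)$, I extract a Cauchy subsequence, map it by $f$ to a Cauchy sequence of $Y$, and observe that a Cauchy sequence is automatically cofinally Cauchy (its tails serve as the required infinite sets $\mathbb N_\epsilon$), whence $(f(x_n))$ is cofinally Cauchy. The step $(c)\Rightarrow(d)$ is merely the specialization $Y=\mathbb R$.

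The two closing arrows $(b)\Rightarrow(e)$ and $(d)\Rightarrow(e)$, both by contraposition, carry the weight, and $(d)\Rightarrow(e)$ is where I expect the real difficulty. Assume $(x_n)$ is $G$-Cauchy with no Cauchy subsequence; then it has no constant subsequence, so the construction in the proof of Theorem \ref{th1} produces a $G$-Cauchy subsequence $(x_{r_n})$ of distinct terms, again without a Cauchy subsequence. The crux is that $A=\{x_{r_n}:n\in\mathbb N\}$ is closed and discrete and that every Cauchy sequence in $(A,M_A,*)$ is eventually constant: for a set of distinct points, being Cauchy is an order-independent property of the underlying set, so a Cauchy sequence in $A$ with infinitely many distinct values would, after re-indexing, yield a genuine Cauchy subsequence of $(x_{r_n})$, a contradiction. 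This immediately gives $(b)\Rightarrow(e)$, since $A$ is then complete yet $(x_{r_n})$ cannot cluster in the discrete $A$, so $A$ fails weak $G$-completeness. For $(d)\Rightarrow(e)$ the same eventually-constant fact shows that $f(x_{r_n})=n$ defines a fuzzy Cauchy-continuous map on $A$; Lemma \ref{prex} extends it to a fuzzy Cauchy-continuous $\overline f:X\to\mathbb R$, and since $(x_{r_n})$ is $G$-Cauchy while $(\overline f(x_{r_n}))=(n)$ is plainly not cofinally Cauchy (distinct integers are pairwise at distance at least $1$), condition (d) is violated. The main obstacle is thus the verification that $f$ is fuzzy Cauchy-continuous—equivalently the eventually-constant lemma for distinct-term Cauchy sequences—and the correct invocation of the extension Lemma \ref{prex}, which is the one place where the completion $\tilde X$ re-enters this direction of the argument.
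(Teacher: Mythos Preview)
Your argument is correct and covers all the essential content of the paper's proof; the hard steps $(d)\Rightarrow(e)$ and $(e)\Rightarrow(a)$ match the paper almost verbatim, including the use of Lemma~\ref{prex} and the density/triangle-inequality approximation in the completion. The organisation, however, differs. The paper runs a single cycle $(a)\Rightarrow(b)\Rightarrow(c)\Rightarrow(d)\Rightarrow(e)\Rightarrow(a)$, whereas you make $(e)$ the hub and prove $(a)\Leftrightarrow(e)$ first. This lets you replace the paper's $(b)\Rightarrow(c)$ step---which goes through a somewhat roundabout precompactness argument on $\{f(x_{r_n})\}$---by the cleaner observation that a sequence possessing a Cauchy subsequence is automatically cofinally Cauchy, and it isolates $(b)\Rightarrow(e)$ as a separate, short step. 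Your justification of the ``eventually constant'' lemma via permutation invariance of Cauchyness is also more conceptual than the paper's explicit nested-index construction $A_r$, though both are valid. Finally, your reading of condition (d) as concerning \emph{fuzzy Cauchy-continuous} maps (so that Lemma~\ref{prex} applies) is the intended one; the phrase ``pseudo Cauchy-continuous'' in the statement appears to be a slip.
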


\begin{proof}

\normalfont 

(a)$\implies$(b): Let $Y$ be a complete subset (as a fuzzy metric subspace) of $X$ and $\phi:(X,M,*)\to(\tilde{X},\tilde{M},\tilde{*})$ be an isometry such that $\phi(X)$ is dense in $(\tilde{X},\tau_{\tilde{M}}).$

Choose a $G$-Cauchy sequence $(y_n)$ in $(Y,M_Y,*).$ Then $(\phi(y_n)),$ being $G$-Cauchy in $(\tilde{X},\tilde{M},\tilde{*}),$ clusters to some point $c$ in $(\tilde{X},\tilde{M},\tilde{*}).$ So there is a subsequence $(\phi(y_{r_n}))$ of $(\phi(y_n))$ such that $\lim\limits_{n\to\infty}\phi(y_{r_n})=c$ in $(\tilde{X},\tilde{M},\tilde{*}),$ whence $\lim\limits_{n\to\infty}y_{r_n}=\phi^{-1}(c)$ in $(X,M,*).$ Since $Y$ is complete, so is $\phi(Y)$ (as a fuzzy metric subspace of $\tilde{X}$) whence $c\in\phi(Y).$ Thus $\phi^{-1}(c)\in Y.$ So $Y$ is weak $G$-complete.

(b)$\implies$(c): Let $(Y,N,\star)$ be a fuzzy metric space and $(x_n)$ be a $G$-Cauchy sequence in $(X,M,*).$ If possible, let $(x_n)$ has no Cauchy subsequence.

Then $A=\{x_n:n\in\mathbb N\}$ is complete as a fuzzy metric subspace and hence weak $G$-complete. Consequently, $(x_n)$ clusters in $X,$ a contradiction. Thus there exists a Cauchy subsequence $(x_{r_n})$ of $(x_n)$ in $(X,M,*).$

We first show that, $\{f(x_{r_n}):n\in\mathbb N\}$ is precompact as a fuzzy metric subspace of $(Y,N,\star)$. 

Suppose otherwise. Then there exists $\epsilon_0\in(0,1),t_0>0$ and a subsequence $(f(x_{m_{r_n}}))$ of $(f(x_{r_n}))$ such that $N(f(x_{m_{r_p}}),f(x_{m_{r_q}}),t_0)\le1-\epsilon_0,~\forall~p\ne q\cdots(*).$ However since $(x_{r_n})$ is Cauchy in $(X,M,*)$, so is $(f(x_{r_n}))$ in $(Y,N,\star),$ a contradiction to $(*)$ Hence $\{f(x_{r_n}):n\in\mathbb N\}$ is precompact. 

Choose $\epsilon\in(0,1),t>0.$ Since $*$ is continuous, there exists $\delta\in(0,1)$ such that $(1-\delta)*(1-\delta)>1-\epsilon.$

Since $\{f(x_{r_n}):n\in\mathbb N\}$ is precompact, there exists $y\in Y$ and an infinite subset $N_0$ of $\mathbb N$ such that $f(x_n)\in B_N(y,\delta,\frac{t}{2}),~\forall~n\in N_0.$ 

Thus $\forall~p,q\in N_0,$ $M(f(x_p),f(x_q),t)\ge M(f(x_p),y,\frac{t}{2})*M(f(x_q),y,\frac{t}{2})\ge(1-\delta)*(1-\delta)>1-\epsilon.$ 

So, $(f(x_n))$ is cofinally Cauchy.

(c)$\implies$(d): Immediate.

(d)$\implies$(e): Let $(x_n)$ be a $G$-Cauchy sequence in $(X,M,*).$ If $(x_n)$ has a constant subsequence, then we are done. So let us assume $(x_n)$ has no constant subsequence. Then proceeding as in Theorem \ref{th1}, we pass $(x_n)$ to a $G$-Cauchy subsequence having distinct terms.

If possible, let $(x_n)$ has no Cauchy subsequence in $(X,M,*).$ Let $A=\{x_n:n\in\mathbb N\}$ and $f:A\to\mathbb R$ be such that $f(x_n)=n,~\forall~x_n\in A.$ 

We first show that $f$ is fuzzy Cauchy-continuous as a mapping from $(A,M_A,*)$ to $\mathbb R.$ 

Let $(y_m)$ be a Cauchy sequence in $(A,M_A,*).$ If $(y_m)$ is eventually constant, then $(f(y_m))$ becomes eventually constant and hence Cauchy. So let us assume $(y_m)$ is not eventually constant. 

Choose $r_1=1.$ Since $(y_m)$ is Cauchy without being eventually constant, so for each $m\in\mathbb N$ there exists $r_{m+1}>r_m$ such that $y_{r_{(m+1)}}\ne y_{r_1},y_{r_2},\cdots,y_{r_m}.$ Thus $(y_{r_m})$ is a Cauchy subsequence of $(y_m)$ having distinct terms. Without loss of generality, let us pass $(y_m)$ to $(y_{r_m}).$

Note that $\exists~N_1\in\mathbb N$ such that $M(y_p,y_q,\frac{1}{2})>1-\frac{1}{2},~\forall~p,q\ge N_1$ and for chosen $N_r,$ $\exists~N_{r+1}~(>N_r)\in\mathbb N$ such that $M(y_p,y_q,\frac{1}{r+2})>1-\frac{1}{r+2},~\forall~p,q\ge N_{r+1}.$

Set $A_r=\{n\in\mathbb N:x_n=y_j\text{ for some }j\ge N_r\},~\forall~r\in\mathbb N.$ Then each $A_r$ is an infinite set of positive integers such that $A_r\supset A_{r+1},~\forall~r\in\mathbb N.$

Clearly $M(x_p,x_q,\frac{1}{r+1})>1-\frac{1}{r+1},~\forall~p,q\in A_r.$

For each $r\in\mathbb N,$ choose $n_r\in A_r$ such that $n_r<n_{r+1}.$ Then $(x_{n_r})$ is a Cauchy sequence in $(X,M,*)$. 

In fact for chosen $\epsilon\in(0,1),t>0$ there exists $r\in\mathbb N$ such that $\frac{1}{r+1}<\min\{\epsilon,t\}.$ Then $\forall~p,q\ge r,$ we have $n_p,n_q\in A_r,$ and consequently, $M(x_{n_p},x_{n_q},t)\ge M(x_{n_p},x_{n_q},\frac{1}{r+1})>1-\frac{1}{r+1}>1-\epsilon,~\forall~p,q\ge r.$ Thus $(x_{n_r})$ is Cauchy.

But it contradicts our assumption that $(x_n)$ has no Cauchy subsequence.

Hence every Cauchy sequence in $(A,M_A,*)$ must be eventually constant whence $f$ is fuzzy Cauchy-continuous.

Thus, in view of Lemma \ref{prex}, $f$ extends to a fuzzy Cauchy-continuous function from $(X,M,*)$ to $\mathbb R.$ So due to the hypothesis, $(f(x_n))$ must be cofinally Cauchy, a contradiction.

Hence the result follows.

(e)$\implies$(a): Let $(y_n)$ be a $G$-Cauchy sequence in $(\tilde{X},\tilde{M},\tilde{*})$ and $\phi:(X,M,*)\to(\tilde{X},\tilde{M},\tilde{*})$ be an isometry such that $\phi(X)$ is dense in $(\tilde{X},\tau_{\tilde{M}}).$ Then $\forall~n\in\mathbb N,~\exists~x_n\in X$ such that $\tilde{M}(\phi(x_n),y_n,\frac{1}{n+1})>1-\frac{1}{n+1}.$

Choose, $\epsilon\in(0,1),t>0.$ Since $\tilde{*}$ is continuous, there exists $\delta\in(0,1)$ such that $(1-\delta)\tilde{*}(1-\delta)\tilde{*}(1-\delta)>1-\epsilon.$

Since $(y_n)$ is $G$-continuous, there exists a positive integer $k >\max\left\{\frac{3}{t},\frac{1}{\delta}\right\}$ such that $\tilde{M}(y_n,y_{n+1},\frac{t}{3})>1-\delta,~\forall~n\ge k.$

Then $\forall~n\ge k,$ $M(x_n,x_{n+1},t)=\tilde{M}(\phi(x_n),\phi(x_{n+1}),t)\ge \tilde{M}(\phi(x_n),y_n,\frac{t}{3})\tilde{*}\tilde{M}(y_n,$ $y_{n+1},\frac{t}{3})\tilde{*}\tilde{M}(\phi(x_{n+1}),y_{n+1},\frac{t}{3})\ge \tilde{M}(\phi(x_n),y_n,\frac{1}{n+1})\tilde{*}\tilde{M}(y_n,y_{n+1},\frac{t}{3})\tilde{*}\tilde{M}(\phi(x_{n+1}),y_{n+1},$ $\frac{1}{n+2})
\ge(1-\delta)\tilde{*}(1-\delta)\tilde{*}(1-\delta)>1-\epsilon.$ 

Thus $(x_n)$ is $G$-Cauchy in $(X,M,*).$

Due to the hypothesis, $(x_n)$ has a Cauchy subsequence $(x_{r_n})$ in $(X,M,*),$ and hence $(\phi(x_{r_n}))$ is Cauchy in $(\tilde{X},\tilde{M},\tilde{*}).$ Let $\lim\limits_{n\to\infty}\phi(x_{r_n})=c$ in $(\tilde{X},\tilde{M},\tilde{*}).$

Then for any choice of $t>0,~\exists~p\in\mathbb N$ such that $\frac{t}{2}>\frac{1}{p+1},$ and hence $\forall~n\ge p,~\tilde{M}(\phi(x_{n}),y_{n},\frac{t}{2})\ge\tilde{M}(\phi(x_{n}),y_{n},\frac{1}{n+1})>1-\frac{1}{n+1}.$  

Since $\lim\limits_{n\to\infty}(1-\frac{1}{n+1})=1,$ it follows that $\lim\limits_{n\to\infty}\tilde{M}(\phi(x_{n}),y_{n},\frac{t}{2})=1,$ and hence $\lim\limits_{n\to\infty}\tilde{M}(\phi(x_{r_n}),y_{r_n},\frac{t}{2})=1.$ Thus $\lim\limits_{n\to\infty}\left[\tilde{M}(\phi(x_{r_n}),c,\frac{t}{2})\tilde{*}\tilde{M}(\phi(x_{r_n}),y_{r_n},\frac{t}{2})\right]=1.$ 

Since $\tilde{M}(y_{r_n},c,t)\ge \tilde{M}(\phi(x_{r_n}),c,\frac{t}{2})\tilde{*}\tilde{M}(\phi(x_{r_n}),y_{r_n},\frac{t}{2}),~\forall~n\in\mathbb N,$ it follows that $\lim\limits_{n\to\infty}\tilde{M}(y_{r_n},c,t)$ $=1.$ Thus $c$ is a cluster point of $(y_n)$ in $(\tilde{X},\tilde{M},\tilde{*}).$ 

Hence $(\tilde{X},\tilde{M},\tilde{*})$ is weak $G$-complete.
\end{proof}

In view of Propositions \ref{pcau}$-$\ref{pr1}, the following is obvious from Theorem \ref{th33}:

\begin{cor}\label{co33}
\normalfont \normalfont Let $(X,d)$ be a metric space. Then the followings conditions are equivalent:

(a) The completion of $(X,d)$ is weak $G$-complete.

(b) Every complete subset (as a metric subspace) of $X$ is weak $G$-complete.

(c) Given any metric space $(Y,\rho)$ and a Cauchy-continuous map $f:(X,d)\to(Y,\rho),$ $f$ takes a $G$-Cauchy sequence of $(X,d)$ to a confinally Cauchy sequence of $(Y,\rho)$.

(d) Given a Cauchy-continuous map $f:(X,d)\to\mathbb R$ where $\mathbb R$ is endowed with the usual metric, $f$ takes a $G$-Cauchy sequence of $(X,d)$ to a confinally Cauchy sequence of $\mathbb R$.

(e) Every $G$-Cauchy sequence in $(X,d)$ has a Cauchy subsequence.
\end{cor}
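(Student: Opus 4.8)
The plan is to obtain this corollary as a direct specialization of Theorem \ref{th33} to the standard fuzzy metric $(X,M_d,\cdot)$, translating each of the five conditions back into purely metric language via the dictionary supplied by the earlier propositions. First I would observe that, by the fuzzy metric completion proposition, $(\tilde X, M_{\tilde d},\cdot)$ is the fuzzy metric completion of $(X,M_d,\cdot)$; combining this with Proposition \ref{pr1}(b) identifies condition (a) of the corollary with condition (a) of Theorem \ref{th33}. In the same spirit, Proposition \ref{pcau}(b) together with Proposition \ref{pr1}(b) matches condition (b) with condition (b) of the theorem, while Proposition \ref{pcau}(a) together with Proposition \ref{pr1}(a) matches condition (e) with condition (e) of the theorem. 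Thus conditions (a), (b), (e) of the corollary are literally the primed conditions of Theorem \ref{th33}, and hence are already mutually equivalent.

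For the continuity conditions I would first record the Cauchy-continuity dictionary: since Proposition \ref{pcau}(a) says a sequence is Cauchy in $(X,d)$ iff it is Cauchy in $(X,M_d,\cdot)$ (and likewise for any target metric space), a map between metric spaces is Cauchy-continuous iff it is fuzzy Cauchy-continuous between the associated standard fuzzy metrics. Feeding in Proposition \ref{pr1}(a) for $G$-Cauchyness and Proposition \ref{pr3} for cofinal Cauchyness, and noting that the standard fuzzy metric on $\mathbb R$ is exactly $M_\rho$ for the usual metric $\rho$, condition (d) of the corollary becomes precisely condition (d) of Theorem \ref{th33}. So (d) joins the equivalence class of (a), (b), (e) as well.

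The one step that needs genuine care, and the main obstacle, is condition (c): the corollary quantifies only over metric targets $(Y,\rho)$, whereas condition (c) of Theorem \ref{th33} quantifies over all fuzzy metric targets $(Y,N,\star)$, and not every fuzzy metric is of the form $M_\rho$. Hence I cannot equate the two conditions term by term. Instead I would thread (c) into the equivalence class through two cheap implications. On one hand, (c) trivially implies (d), since (d) is just the instance $(Y,\rho)=\mathbb R$ of (c). On the other hand, condition (c) of Theorem \ref{th33}, which quantifies over all fuzzy targets, implies (c) of the corollary by restricting attention to the standard fuzzy targets $(Y,M_\rho,\cdot)$ and applying the dictionary of the previous paragraph. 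Since condition (a) of the corollary coincides with condition (a) of the theorem, and the latter implies condition (c) of the theorem, I obtain the cycle $\text{(c)}\implies\text{(d)}\implies\text{(a)}\implies\text{(c) of Theorem \ref{th33}}\implies\text{(c)}$, which closes the loop and places all five conditions in a single equivalence class. No new estimates are required beyond these translations, which is why the result follows essentially formally from Theorem \ref{th33}.
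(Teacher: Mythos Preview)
Your proposal is correct and follows essentially the same approach as the paper: the paper simply states that the corollary is obvious from Theorem \ref{th33} together with the translation propositions, and you have spelled out exactly that translation. Your explicit handling of the mismatch in condition (c)---metric targets versus arbitrary fuzzy targets---via the chain (c)$\Rightarrow$(d)$\Rightarrow$(a)$\Rightarrow$(c) of Theorem \ref{th33}$\Rightarrow$(c) is a careful elaboration of a point the paper leaves implicit; note also that Proposition \ref{pr3} (cofinal Cauchyness) is needed for conditions (c) and (d), even though the paper's one-line justification does not cite it explicitly.
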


\begin{note}
\normalfont In theorem \ref{th33}, it is absolute necessary to assume the existence of fuzzy metric completion of $(X,M,*).$ For otherwise, we may obtain a fuzzy metric space that does not have a fuzzy metric completion, however every $G$-Cauchy sequence in it has a Cauchy subsequence. For instance, consider the following example:

Let $(x_n)_{n=3}^\infty$ and $(y_n)_{n=3}^\infty$ be two disjoint sequences of distinct points and $X=\{x_n:n\ge3\}\cup\{y_n:n\ge3\}.$ Define $M:X\times X\times(0,\infty)\to\mathbb R$ by $M(x_n,x_m,t)=M(y_n,y_m,t)=1-\left[\frac{1}{\min\{m,n\}}-\frac{1}{\max\{m,n\}}\right]$ and $M(x_n,y_m,t)=M(y_m,x_n,t)=\frac{1}{m}+\frac{1}{n},~\forall~m,n\ge3.$ If $*$ denotes the continuous $t$-norm defined by $a*b=\max\{0,a+b-1\},~\forall~a,b\in[0,1]$ then we know from \cite{cpm} that 

i) $(X,M,*)$ is a fuzzy metric space without having any fuzzy metric completion;

ii) $(x_n)_{n=3}^\infty$ and $(y_n)_{n=3}^\infty$ are Cauchy sequences in $(X,M,*).$

Since every subsequence of a Cauchy sequence is Cauchy, it is immediate to realize that every $G$-sequence in $X$ has a Cauchy subsequence, though $(X,M,*)$ has no fuzzy metric completion.
\end{note}

\begin{cor}
\normalfont Let $X$ be a (fuzzy) metric space having a (fuzzy) metric completion which is weak $G$-complete. Then every $G$-Cauchy sequence in $X$ is (fuzzy) cofinally Cauchy.
\end{cor}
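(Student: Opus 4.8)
The plan is to read the result off Theorem \ref{th33}. By hypothesis the completion $(\tilde X,\tilde M,\tilde *)$ is weak $G$-complete, so condition (a) of Theorem \ref{th33} holds; by the equivalence established there, condition (e) holds as well, namely that every $G$-Cauchy sequence in $X$ has a Cauchy subsequence. Thus the entire content of the corollary reduces to the elementary observation that \emph{any sequence possessing a Cauchy subsequence is automatically fuzzy cofinally Cauchy}. So I would begin by fixing an arbitrary $G$-Cauchy sequence $(x_n)$ in $X$ and extracting, via Theorem \ref{th33}(e), a Cauchy subsequence $(x_{r_n})$.

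Next I would verify cofinal Cauchyness directly from the definition. Fix $\epsilon\in(0,1)$ and $t>0$. Since $(x_{r_n})$ is Cauchy in $(X,M,*)$, there exists $k\in\mathbb{N}$ with $M(x_{r_m},x_{r_n},t)>1-\epsilon$ for all $m,n\ge k$. Setting $\mathbb{N}_\epsilon=\{\,r_n:n\ge k\,\}$, which is an infinite subset of $\mathbb{N}$, every pair $p,q\in\mathbb{N}_\epsilon$ is of the form $p=r_m,\,q=r_n$ with $m,n\ge k$, so $M(x_p,x_q,t)>1-\epsilon$. This is exactly the defining requirement for $(x_n)$ to be fuzzy cofinally Cauchy, and hence the result follows.

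The classical metric case is handled identically, substituting Corollary \ref{co33}(a)$\iff$(e) for the fuzzy statement and the metric cofinal-Cauchy condition $d(x_p,x_q)<\epsilon$ for the fuzzy one. I do not anticipate any genuine obstacle here, since the argument is purely a matter of invoking the already-proven equivalence. The one point worth flagging, to avoid over-complicating the proof, is that the definition of (fuzzy) cofinal Cauchyness only demands, for each pair $(\epsilon,t)$, an infinite index set $\mathbb{N}_\epsilon$ that may depend on $\epsilon$ and $t$; one does \emph{not} need a single infinite set serving all parameters simultaneously. Consequently a tail $\{\,r_n:n\ge k\,\}$ of the index set of the Cauchy subsequence suffices, and no uniform choice across $\epsilon,t$ is required.
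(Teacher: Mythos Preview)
Your argument is correct, but it proceeds via a different equivalence than the paper's own proof. The paper invokes condition (c) of Theorem \ref{th33} (respectively Corollary \ref{co33}) and simply takes $Y=X$ and $f$ the identity map: since the identity is trivially (fuzzy) Cauchy-continuous, condition (c) says directly that every $G$-Cauchy sequence in $X$ is sent to a cofinally Cauchy sequence in $X$, and the corollary is immediate. You instead pass through condition (e), extract a Cauchy subsequence, and then unfold the definition to see that a tail of its index set witnesses cofinal Cauchyness for each $(\epsilon,t)$. Both routes are short and valid; the paper's is a one-line specialization, whereas yours makes explicit the elementary fact that ``has a Cauchy subsequence $\Rightarrow$ cofinally Cauchy'' --- which is really what is going on in either case. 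Your remark that the infinite index set $\mathbb N_\epsilon$ may depend on $(\epsilon,t)$ is accurate and worth keeping.
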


\begin{proof}
\normalfont Immediate from the third conditions of Theorem \ref{th33} and Corollary \ref{co33} by considering $Y$ to be the space $X$ itself and $f$ to be the identity mapping on $X$.
\end{proof}

\begin{note}
\normalfont $(\sum_{i=1}^n\frac{1}{i})$ is a $G$-Cauchy sequence in $\mathbb R$ (endowed with the usual metric) which is not cofinally Cauchy. Hence $\mathbb R$ is not weak $G$-complete. Thus unlike cofinally complete metric  spaces \cite{3} a finite dimensional normed linear space may not be weak $G$-complete.
\end{note}

\end{document}